\theoremstyle{montheoreme}
\newtheorem{thm}{Theorem}[section]
\newtheorem{prop}[thm]{Proposition}
\newtheorem{rque}{Remark} [section]
\theoremstyle{maremarque}
\DeclareMathOperator{\Var}{Var}
\newcommand{\R}{\mathbb{R}}
\newcommand{\cL}{\mathcal{L}}
\begin{document}

\title{Stability of Klartag's  improved Lichnerowicz inequality}
\date{\today}
\author{%
  Thomas~A.~Courtade\footnote{University of California, Berkeley, United States, Department of Electrical Engineering and Computer Sciences.\\
    courtade@berkeley.edu}
  \and 
  Max~Fathi\footnote{Université Paris Cité and Sorbonne Université, CNRS, Laboratoire Jacques-Louis Lions and Laboratoire de Probabilit\'es, Statistique et Mod\'elisation, F-75013 Paris, France\\
and DMA, École normale supérieure, Université PSL, CNRS, 75005 Paris, France \\
and Institut Universitaire de France\\ mfathi@lpsm.paris }}

\maketitle

\begin{abstract}
In a recent work, Klartag gave an improved version of Lichnerowicz' spectral gap bound for uniformly log-concave measures, which improves on the classical estimate by taking into account the covariance matrix. We analyze the equality cases in Klartag's bound, showing that it can be further improved whenever the measure has no Gaussian factor. Additionally, we give a quantitative improvement for log-concave measures with finite Fisher information. 
\end{abstract}

\section{Introduction and main results}

The Poincar\'e constant of a probability measure $\mu$ on $\R^d$, which we shall denote by $C_P \equiv C_P(\mu)$, is the smallest constant $C$ such that for any locally Lipschitz function $f$, we have
$$\Var_{\mu}(f) := \int{f^2d\mu} - \left(\int{fd\mu}\right)^2 \leq C\int{|\nabla f|^2d\mu}.$$
This inequality has many applications in probability, analysis and statistics, including concentration inequalities and estimates on rates of convergence for Markov processes. We refer to the monograph \cite{BGL14} for background on this inequality. 

The goal here is to study the equality cases and near-equality cases of the following result of \cite{Kla23}:

\begin{thm}[Improved log-concave Lichnerowicz theorem, Klartag 2023]
Let $\mu$ be a $t$-log-concave probability measure with covariance matrix $A$. Then
\begin{align}
C_P(\mu) \leq \sqrt{\frac{\|A\|_{op}}{t}}.\label{eq:ImprovedLichnerowicz}
\end{align}
\end{thm}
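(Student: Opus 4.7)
The plan is to combine the Bochner--Lichnerowicz identity with an iterated use of the Poincaré inequality applied to the derivatives of the spectral gap eigenfunction, and then bring the covariance matrix into the picture through a Cauchy--Schwarz argument against a linear test function.

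First, I would fix a Poincaré eigenfunction $f$ normalized so that $\int f\, d\mu = 0$ and $\int f^2\, d\mu = 1$, satisfying $-Lf = \lambda f$, where $L = \Delta - \nabla V \cdot \nabla$ is the generator of $\mu = e^{-V}dx$ and $\lambda = 1/C_P(\mu)$. (Existence and smoothness can be arranged by a standard approximation of $V$ by smooth strongly convex potentials with compact sub-level sets, passing to the limit at the end.) The Bochner formula then gives
\[
\lambda^2 \;=\; \int (Lf)^2\, d\mu \;=\; \int \|\Hess f\|_{HS}^2\, d\mu \;+\; \int \langle \Hess V\, \nabla f, \nabla f\rangle\, d\mu \;\geq\; \int \|\Hess f\|_{HS}^2\, d\mu \;+\; t\lambda,
\]
using $\Hess V \geq tI$ together with $\int |\nabla f|^2 d\mu = \lambda$.

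Next, I would apply the Poincaré inequality to each partial derivative $\partial_i f$ and sum in $i$, using $\sum_i \int |\nabla \partial_i f|^2 d\mu = \int \|\Hess f\|_{HS}^2 d\mu$ and $\sum_i (\int \partial_i f\, d\mu)^2 = |\E_\mu[\nabla f]|^2$. This yields
\[
\int \|\Hess f\|_{HS}^2\, d\mu \;\geq\; \lambda \sum_i \Var_\mu(\partial_i f) \;=\; \lambda\bigl(\lambda - |\E_\mu[\nabla f]|^2\bigr).
\]
Combining with the previous display produces the key intermediate bound $|\E_\mu[\nabla f]|^2 \geq t$.

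To bring in $A$, I would set $v := \E_\mu[\nabla f]$ and take the linear test function $\ell(x) := \langle v, x-\bar x\rangle$, where $\bar x$ is the barycenter of $\mu$. Since $\nabla \ell = v$ is constant, the eigenvalue equation and self-adjointness of $-L$ give
\[
\|v\|^2 \;=\; \int \nabla\ell \cdot \nabla f\, d\mu \;=\; \lambda \int \ell f\, d\mu \;\leq\; \lambda\, \sqrt{\langle Av, v\rangle}\,\sqrt{\int f^2\, d\mu} \;\leq\; \lambda\, \sqrt{\|A\|_{op}}\, \|v\|,
\]
where I used Cauchy--Schwarz and $\int \ell^2 d\mu = \langle Av, v\rangle \leq \|A\|_{op}\|v\|^2$. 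Dividing by $\|v\|$ (which is nonzero by the previous step, since $t > 0$) and squaring gives $\|v\|^2 \leq \lambda^2 \|A\|_{op}$. Combined with $\|v\|^2 \geq t$, this yields $\lambda \geq \sqrt{t/\|A\|_{op}}$, which rearranges to \eqref{eq:ImprovedLichnerowicz}.

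The main technical obstacle I anticipate is justifying the regularity of $f$ needed for the Bochner identity and for the Poincaré bound on each $\partial_i f$; this is handled by the standard approximation scheme mentioned above, using stability of the covariance matrix, the spectral gap, and $\Hess V \geq tI$ under the approximation.
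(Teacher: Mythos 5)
Your proof is correct and takes essentially the same route as the one embedded in the paper's proof of Theorem~\ref{thm_rigid} (which retraces Klartag's argument): Bochner plus the Poincar\'e inequality applied to the partial derivatives of the spectral-gap eigenfunction gives the chain \eqref{bochner}, which with $g=f$ yields $|\E_\mu[\nabla f]|^2 \geq t/\lambda \cdot \lambda = t$, and then Cauchy--Schwarz against the linear function in the direction $\theta = v/\|v\|$ gives $\|v\|^2 \leq \lambda^2\|A\|_{op}$, matching \eqref{cs_improved_lich}--\eqref{direction_improved_lich} exactly (your $\int \ell f\,d\mu$ differs from the paper's $\int (\cL f)\langle\theta,x\rangle\,d\mu$ only by the factor $-\lambda$, which disappears after squaring). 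The only cosmetic difference is that the paper normalizes $\|A\|_{op}=1$ from the start, while you carry $\|A\|_{op}$ explicitly; your remark on handling existence and regularity of the eigenfunction by approximation is also consistent with the paper's appeal to \cite{GMS}.
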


This theorem improves on the Lichnerowicz bound on the spectral gap  for log-concave measures (which states that $C_P \leq t^{-1}$ in the Euclidean setting), since for such measures the variance in any direction is always at most $t^{-1}$. This bound is not actually due to Lichnerowicz, the name stems from an analogy with his work on the spectral gap of positively curved Riemannian manifolds \cite{Lich, BGL14}. 

The Kannan-Lovasz-Simonovits conjecture predicts a dimension-free upper bound of the form $C_P \leq c \|A\|_{op}$ for numerical constant $c$, but there is no predicted value of $c$. The best result currently known is an upper bound of the form $c \leq \alpha \log d$,  where $\alpha$ is a numerical constant.  This was derived in \cite{Kla23} using the above improved Lichnerowicz bound. 

The equality cases in the original Lichnerowicz bound were shown to split off a Gaussian factor in \cite{CZ17, GKKO18}, in a broader geometric setting. A similar splitting result for the stronger logarithmic Sobolev inequality is also available in the geometric setting \cite{OT}. Our first result is the analysis of equality cases in Klartag's improved inequality, showing that equality holds only if $\mu$ is a product measure with a Gaussian factor in the direction of largest variance: 

\begin{thm}[Rigidity] \label{thm_rigid}
If $\mu$ is a $t$-log-concave probability measure satisfying \eqref{eq:ImprovedLichnerowicz} with equality, then $\mu$ splits off a Gaussian factor in a direction of largest variance.  More precisely, up to a rotation and translation, $\mu$ is of the form $\bar{\mu}\otimes   \gamma_{1/t} $, where $\bar{\mu}$ is the marginal of $\mu$ on the first $d-1$ coordinates and  $\gamma_{1/t}$ is the one-dimensional  Gaussian measure  with variance $1/t = \|A\|_{op}$. 
\end{thm}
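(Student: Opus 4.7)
The plan is to reduce the statement to the rigidity of the classical Lichnerowicz inequality. Testing \eqref{eq:ImprovedLichnerowicz} against a linear function $f(x) = \langle v, x\rangle$ immediately gives $C_P(\mu) \geq \|A\|_{op}$, while the Brascamp-Lieb variance bound gives $\|A\|_{op} \leq 1/t$. Equality in \eqref{eq:ImprovedLichnerowicz} then forces the chain
\[
\|A\|_{op} \leq C_P(\mu) = \sqrt{\|A\|_{op}/t} \leq 1/t.
\]
The crux of the proof is to show $\|A\|_{op} = 1/t$, so that $C_P(\mu) = 1/t$ and $\mu$ is also extremal for the classical Lichnerowicz bound. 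Once this is done, the rigidity of Lichnerowicz due to Cheng-Zhou \cite{CZ17} and Gigli-Ketterer-Kuwada-Ohta \cite{GKKO18} forces $\mu$ to split off a one-dimensional Gaussian factor of variance $1/t$; and since $1/t = \|A\|_{op}$, this factor must lie along a top eigendirection of $A$, as claimed.

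To prove $\|A\|_{op} = 1/t$ from equality in \eqref{eq:ImprovedLichnerowicz}, the natural approach is to revisit Klartag's proof in \cite{Kla23} and identify its equality cases. His argument runs a stochastic localization martingale $(\mu_s)_{s \geq 0}$ of Gaussian tilts of $\mu$, combining a covariance growth estimate with a Brascamp-Lieb-type Poincaré bound at each time. Equality in the final bound should force each intermediate estimate to be saturated for a.e.\ time $s$, which in turn translates into structural constraints on $\mu$ that include equality in the Brascamp-Lieb bound used at $s=0$, hence $\|A\|_{op} = 1/t$.

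A cleaner alternative I would try in parallel is a perturbation/tilting argument. Apply Klartag's inequality to the family $\mu_\epsilon \propto e^{\epsilon \langle v, x\rangle}\mu$ with $v$ a top eigenvector of $A$; these measures remain $t$-log-concave. Since $\mu$ is extremal at $\epsilon = 0$, the deficit $\epsilon \mapsto \sqrt{\|A_\epsilon\|_{op}/t} - C_P(\mu_\epsilon)$ attains its minimum there, yielding vanishing first-order and nonnegative second-order variations. Writing these derivatives in terms of the Poincaré eigenfunction of $\cL$ and the third central moments of $\mu$ should give additional identities that, combined with equality, pin down the product structure in the theorem statement.

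The main obstacle is the first step, extracting $\|A\|_{op} = 1/t$ from equality in Klartag, since this requires detailed bookkeeping inside the stochastic localization proof (or a delicate perturbation computation). Once this is in hand, the conclusion follows from the classical Lichnerowicz rigidity of \cite{CZ17, GKKO18}, and the identification of the Gaussian direction with a top eigendirection of $A$ is automatic from $\|A\|_{op} = 1/t$.
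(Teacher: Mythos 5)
You correctly identify the overall reduction: rescale so $\|A\|_{op}=1$, show that $t$ must equal $1$ (equivalently $\|A\|_{op}=1/t$), and then invoke the classical Lichnerowicz rigidity of \cite{CZ17, GKKO18}. This is precisely the skeleton of the paper's proof, and the observation that $\|A\|_{op}=1/t$ automatically places the Gaussian factor in a top eigendirection of $A$ is also correct.

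However, the crux of the argument --- the step you yourself flag as ``the main obstacle,'' namely proving $\|A\|_{op}=1/t$ from equality in \eqref{eq:ImprovedLichnerowicz} --- is left unproved. Your chain of inequalities $\|A\|_{op}\le C_P(\mu)=\sqrt{\|A\|_{op}/t}\le 1/t$ just reproduces the Brascamp--Lieb bound $\|A\|_{op}\le 1/t$ and does not close to an equality. Neither of your two proposed routes (tracing equality through a stochastic localization martingale, or a tilting perturbation argument) is carried out, and the first one is arguably aimed at the wrong proof: Klartag's argument for \eqref{eq:ImprovedLichnerowicz} is a short Bochner-formula computation, not stochastic localization (which he uses elsewhere in \cite{Kla23}). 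The paper's proof exploits exactly this: take an eigenfunction $f$ of $-\cL$ with eigenvalue $C_P^{-1}=\sqrt{t}$ (which exists for uniformly log-concave measures), test the integrated Bochner identity $\int (\cL g)^2 d\mu \geq \int\|\nabla^2 g\|_{HS}^2 d\mu + t\int|\nabla g|^2 d\mu$ with $g=f$, and then estimate $\left|\int\nabla f\,d\mu\right|^2$ by Cauchy--Schwarz against the linear function $\langle\theta,x\rangle$ with $\theta = \int\nabla f\,d\mu/|\int\nabla f\,d\mu|$. Saturation of both inequalities forces $\cL f$ to be proportional to $\langle\theta,x\rangle$, hence $f$ itself is affine, which gives $\sqrt{t}=\int|\nabla f|^2 d\mu = |\theta|^2 = 1$, i.e.\ $t=1$. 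Without this (or an equivalent) computation, your proposal has a genuine gap at its central step.
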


We remark that the Poincar\'e constant $C_P(\mu)$ is invariant to translations and rotations of $\mu$, so the Gaussian direction cannot be identified without further assumptions. Measures which split off  a Gaussian factor in the  direction of maximum variance are also equality cases in the unimproved Lichnerowicz theorem \cite{CZ17, GKKO18}. Hence, while the improved Lichnerowicz theorem is almost always strictly better than the unimproved inequality, whenever it actually improves, it is not sharp. 

Recently, there has been much interest in quantitative stability for functional inequalities. The problem consists in identifying an explicit improvement in the original functional inequality, that involves some distance to the set of equality cases. We refer to the surveys \cite{Fig13, Fig23} for the broader context (including Sobolev inequalities, isoperimetric inequalities and the Brunn-Minkowski inequality as emblematic examples). For stability in the Lichnerowicz spectral gap bound in Euclidean space, the first results were obtained in \cite{DPF17} using regularity properties of the Monge-Amp\`ere equation, and were later improved in \cite{CF20} using Stein's method. There are also partial results in the geometric setting \cite{BF, FGS}. In the present work, we  also rely on  Stein's method --- indirectly, through the results of \cite{CF20} --- to study stability for Klartag's improved estimate. 

\begin{thm}[Stability] \label{thm_stab}
Let $\mu$ be a $t$-uniformly log-concave probability measure with covariance matrix $A$,  and   $\beta^2 := \sup_{v \in \mathbb{S}^{d-1}} \int{\langle \nabla V(x), v\rangle^2d\mu}$. There is an absolute constant $c>0$ (that in particular does not depend on $d$) such that, up to a rotation and translation, 
$$
C_P \leq \left( \frac{\|A\|^2_{op} (1+\beta)^2}{\|A\|^2_{op}(1+\beta)^2 + c ( \|A\|^2_{op} \wedge W_1(\mu, \bar{\mu} \otimes \gamma_{1/t})^4) } \right)  \sqrt{\frac{\|A\|_{op}}{t}},
$$
where $\bar{\mu}$ is the marginal of $\mu$ on the first $d-1$ coordinates and $\gamma_{1/t}$ is the Gaussian measure with variance $1/t$. 
\end{thm}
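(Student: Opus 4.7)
The plan is to reduce the problem to a Stein's-method stability estimate for the classical Lichnerowicz inequality, available in \cite{CF20}, and then combine it with Klartag's bound \eqref{eq:ImprovedLichnerowicz} by an arithmetic argument. First, by invariance of all quantities appearing in the statement under rotations and translations (the Poincar\'e constant, $\|A\|_{op}$, $t$, $\beta$, and the $W_1$ distance to a Gaussian product are all unchanged by orthogonal transformations composed with a shift), I would rotate and translate $\mu$ so that it is centered and the direction of largest variance is the last coordinate axis. The product $\bar\mu \otimes \gamma_{1/t}$ then plays the role of the natural reference measure, as dictated by the rigidity characterization in Theorem~\ref{thm_rigid}.

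Next, I would invoke the Stein's-method stability result from \cite{CF20} to obtain an estimate of the form
$$
C_P(\mu) \leq \frac{1}{t}\left(1 - \frac{c_1 \, W_1(\mu, \bar\mu \otimes \gamma_{1/t})^4}{(1+\beta)^2}\right),
$$
quantifying the deficit in the classical Lichnerowicz bound $1/t$ in terms of the $W_1$ distance to a Gaussian product, modulated by the Fisher-information-type quantity $\beta^2$. Together with Klartag's bound $C_P \leq \sqrt{\|A\|_{op}/t}$, this furnishes two upper bounds whose minimum controls $C_P$.

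The third step is a purely algebraic manipulation that combines these two upper bounds into the claimed multiplicative form $P \cdot \sqrt{\|A\|_{op}/t}$. Setting $r := t\|A\|_{op} \in (0,1]$ and $\eta := c_1 W_1^4/(1+\beta)^2$, the estimate $C_P \leq \min(\sqrt{r},\, 1-\eta)/t$ is to be massaged into the stated prefactor by elementary manipulation (dividing numerator and denominator by $\sqrt{r}$, then using that $1-\eta \leq 1$). The cap $\|A\|_{op}^2$ in $\|A\|_{op}^2 \wedge W_1^4$ emerges because, once $W_1^4 \gtrsim \|A\|_{op}^2$, the two bounds cross and the improvement over Klartag saturates at a $\beta$-dependent universal factor. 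In this saturated regime, the constant-factor gain can be extracted by revisiting Klartag's proof with the rigidity theorem as a guide, inserting quantitative slack at the step where equality forces Gaussian splitting.

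The main obstacle I expect is obtaining the CF20-type stability bound in precisely the required form: the result from \cite{CF20} is formulated with respect to its own natural reference measure and regularity parameter, and converting it to the product reference $\bar\mu \otimes \gamma_{1/t}$ with the specific $(1+\beta)^2$ regularization will require some work. A plausible route is to disintegrate $\mu$ along the direction of largest variance, apply the one-dimensional version of the CF20 estimate to the conditional marginal in that direction, and then lift back using the tensor structure of the reference measure. Controlling the $W_1$ distance to $\bar\mu \otimes \gamma_{1/t}$ through this disintegration, rather than to an unspecified Gaussian product, is the key technical step.
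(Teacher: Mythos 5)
The proposal has a genuine gap. You want to combine Klartag's bound $C_P \le \sqrt{\|A\|_{op}/t}$ with a CF20-type estimate $C_P \le t^{-1}\bigl(1 - c_1 W_1^4/(1+\beta)^2\bigr)$ by taking the minimum and then doing algebra. This fails on two counts.

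First, the stability result you extract from \cite{CF20} is not of the form you postulate. As quoted in Proposition~\ref{prop:stabilityPI}, that result gives an implication in the opposite direction from what you write, and with different exponents and no $\beta$: if the deficit $\delta$ in the \emph{classical} Lichnerowicz inequality is small, then $W_1(\mu,\bar\mu\otimes\gamma_{1/t}) \lesssim \sqrt{\delta}$. Inverting this yields something like $C_P \le \frac{1}{t(1+c\,W_1^2)}$, with $W_1^2$ (not $W_1^4$) and with no $(1+\beta)^2$ regularization. The quartic exponent and the $\beta$-dependence in the stated theorem are \emph{not} features of the CF20 estimate; they arise from a different part of the argument, namely a quantitative reduction from near-equality in Klartag's improved bound to near-equality in the unimproved bound. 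That reduction proceeds by examining a spectral-gap eigenfunction $f$, tracking the deficit $\epsilon$ in $C_P^{-1}=\sqrt{t}(1+\epsilon)$ through the Bochner and Cauchy--Schwarz steps, and showing $\|\nabla f-\theta\|_{L^2(\mu)}^2 \lesssim (1+\beta)\sqrt{\epsilon}$; this is where $\beta$ enters and where the exponent degrades. No direct invocation of \cite{CF20} can produce this.

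Second, even granting a correct CF20-type estimate and Klartag's bound, the minimum of the two does not imply the stated multiplicative improvement. After normalizing $\|A\|_{op}=1$ (so $t\le 1$), $\min\bigl(t^{-1/2},\, t^{-1}(1-\eta)\bigr)$ equals $t^{-1/2}$ whenever $\sqrt{t}<1-\eta$, i.e., whenever $t$ is sufficiently far from $1$ — and in that regime the min argument gives back exactly Klartag's bound with no improvement at all, while the theorem asserts a strict improvement as long as $W_1>0$. This is precisely the regime where the real work is needed. You acknowledge this by saying one must ``revisit Klartag's proof\ldots inserting quantitative slack at the step where equality forces Gaussian splitting,'' but that revisitation \emph{is} the substance of the proof: it is the eigenfunction chain of estimates sketched above, which simultaneously controls how close $t$ must be to $1$ and how small the classical Lichnerowicz deficit must be in terms of $\epsilon$. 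Treating it as an afterthought leaves the argument without its central mechanism.
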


\begin{rque}
We can trivially bound $\beta^2$ by the Fisher information $\int{|\nabla V|^2d\mu}$ (and also control it by the Fisher information relative to the Gaussian, up to an additive constant), but this comparison typically loses a dimensional factor. 
\end{rque}

The sequel consists in the proofs of these two theorems. 

\textbf{Acknowledgments}: This work was done during the workshop \emph{Interactions between Probability and PDE} at the CIRM in 2023, and we thank Joseph Lehec for his lectures on \cite{Kla23} that inspired this work. MF was supported by the Agence Nationale de la Recherche (ANR) Grant ANR-23-CE40-0003 (Project CONVIVIALITY).   TC acknowledges NSF-CCF~1750430, the hospitality of the Laboratoire de Probabilit\'es, Statistique et Mod\'elisation (LPSM) at the Universit\'e Paris Cit\'e, and the  Invited Professor program of the Fondation Sciences Math\'ematiques de Paris (FSMP). 

\section{Proofs}

In the sequel, the covariance matrix of a probability measure is defined by
$$\operatorname{Cov}(\mu)_{ij} := \int{x_ix_jd\mu} - \left(\int{x_id\mu}\right)\left(\int{x_jd\mu}\right).$$
It is a well-defined matrix when $\mu$ is a log-concave probability measure, and is symmetric positive semidefinite.

A probability measure is said to be $t$-log-concave for some $t \in \R$ if it has a density $e^{-V}$ with respect to the Lebesgue measure, and such that the function $x\mapsto V(x) - t|x|^2/2$ is convex. We shall only consider the case $t > 0$ here. 

Given a $t$-log-concave probability measure $\mu$, we define the linear operator $\cL$ by the relation
$$\int{(\cL f)gd\mu} = -\int{\nabla f \cdot \nabla g d\mu}$$
for all smooth compactly supported functions $f,g$. When $\mu$ has a positive density $e^{-V}$ on its support (which is the case in non-degenerate situations),  $\cL$ is given by the formula
$$\cL f = \Delta f - \nabla V \cdot \nabla f.$$
The operator $\cL$ can   be viewed as the generator of a drift-diffusion process that is reversible with respect to $\mu$. The measure $\mu$ satisfies a Poincar\'e inequality iff $\cL$ has a spectral gap, and $C_P^{-1}$ is the infimum of the positive eigenvalues of $-\cL$.

While   the spectral gap might not be attained in general, the spectral gap is a true eigenvalue of the generator  for uniformly log-concave measures. That is, there exists a non-zero eigenfunction $f$ such that $\cL f = - C_P^{-1}f$ (as a consequence of \cite[Proposition 6.7]{GMS}, for example). 

\subsection{Proof of Theorem \ref{thm_rigid}}
The proof consists in keeping track of equality cases in the proof of the improved Lichnerowicz inequality \cite{Kla23}, to see that an eigenfunction associated with the spectral gap $C_P(\mu)^{-1}$ is affine, which then forces equality in the original Lichnerowicz inequality. A more general result of E. Meckes \cite{Mec} also states that,  even without log-concavity, if an eigenfunction of a Markov diffusion generator is affine, then the measure is Gaussian in the associated direction.  However, this result of Meckes  does not automatically imply independence of the Gaussian factor relative to orthogonal directions.  

\begin{proof}
The Poincar\'e inequality is equivalent to the inequality
$$\int{|\nabla g|^2d\mu} \leq C_P(\mu)\int{(\cL g)^2d\mu}.$$
By the Bochner formula
\begin{align} \label{bochner}
\int{(\cL g)^2d\mu} &= \int{\|\nabla^2 g\|_{HS}^2d\mu} + \int{\langle \nabla^2 V \nabla g, \nabla g\rangle d\mu} \notag\\
&\geq \int{\|\nabla^2 g\|_{HS}^2d\mu} +t \int{|\nabla g|^2 d\mu} \notag \\
&\geq C_P^{-1}\left(\int{|\nabla g|^2d\mu} - \left|\int{\nabla g d\mu}\right|^2\right) +t \int{|\nabla g|^2 d\mu}.
\end{align}
Without loss of generality, we can rescale the measure $\mu$ so that $\|A\|_{op} = 1$. After rescaling, the value of $t$ is necessarily smaller than $1$ (for example, by testing the Lichnerowicz bound on $C_P(\mu)$ with affine functions). The goal is to show that if $C_P = t^{-1/2}$ then $t = 1$. This will imply that equality cases in the improved Lichnerowicz inequality are also equality cases in the original Lichnerowicz inequality, and then we will just apply the rigidity result of \cite{CZ17}. 

Assume that $f$ is an eigenfunction of $-\cL$ with eigenvalue $t^{1/2}$, normalized so that $\int{f^2d\mu} = 1$. Note that we necessarily have $\int{f d\mu} = 0$ since $f$ is orthogonal to the kernel of $\cL$, which is the set of constant functions. We also have
$$\int{|\nabla f|^2d\mu} = -\int{(\cL f)fd\mu} =  \sqrt{t}; \hspace{3mm} \int{(\cL f)^2d\mu} =  t.$$

If $\int{\nabla f d\mu} = 0$ then taking $g = f$ in \eqref{bochner} would yield $t \geq t  + t^{3/2}$ which would be a contradiction. So without loss of generality we can assume $\int{\nabla f d\mu} \neq 0$. Let 
$\theta = \frac{\int{\nabla f d\mu}}{\left|\int{\nabla f d\mu}\right|}$. 
We have
\begin{align}
\left|\int{\nabla fd\mu}\right|^2 &= \left(\int{\nabla f \cdot \theta d\mu}\right)^2 \notag \\
&= \left(\int{\nabla f \cdot \nabla (\langle \theta, x \rangle)d\mu}\right)^2 \notag \\
&= \left(\int{(\cL f)\langle \theta, x\rangle d\mu}\right)^2 \notag \\
&\leq \int{(\cL f)^2d\mu} \int{\langle \theta, x\rangle^2d\mu} \label{cs_improved_lich} \\
&\leq \int{(\cL f)^2d\mu}\|A\|_{op}|\theta|^2  =  \int{(\cL f)^2d\mu}= t. \label{direction_improved_lich}
\end{align}
Combined with \eqref{bochner} we get $t \geq t - t^{3/2} +t^{3/2}$. But since there is equality, there is actually equality at every step along the way, and in particular in \eqref{cs_improved_lich} and \eqref{direction_improved_lich}.

Equality in \eqref{direction_improved_lich} means that $\theta$ is a direction of unit variance (i.e. maximal in the scaling we consider). Equality in \eqref{cs_improved_lich} means that $\cL f(x) = \lambda \langle x, \theta\rangle$ for some $\lambda \in \R$. Equating the $L^2$ norms of both sides identifies $\lambda = \pm \sqrt{t}$, and since $f$ is an eigenfunction, we have $f(x) = \pm \langle \theta, x\rangle$. But then $\nabla f = \pm \theta$ and therefore $\sqrt{t} = \int{|\nabla f|^2d\mu} = 1$ so that $t = 1 = \|A\|_{op}$, and also $C_P(\mu) = 1$. The conclusion follows from the equality case in the unimproved Lichnerowicz theorem on $\R^d$ \cite[Theorem 2]{CZ17}, and rescaling.

\end{proof}

\subsection{Proof of Theorem \ref{thm_stab}}
We will show that near-equality in the improved Lichnerowicz inequality forces $t$ to be close to $1$, which forces near-equality in the unimproved Lichnerowicz theorem.  The conclusion will then follow from the results of \cite{CF20}, which we quote here in a suitable form.
\begin{prop}[{\cite[Theorem 1.2]{CF20}}]\label{prop:stabilityPI}
Let $\mu$ be a $t$-uniformly log-concave probability measure.  If $C_P^{-1}\leq t  (1+\delta)$, then  then up to a rotation and translation
$$W_1(\mu, \bar{\mu} \otimes \gamma_{1/t}) \leq 26 \sqrt{\delta},$$
where $\bar{\mu}$ is the projection of $\mu$ onto the first $d-1$ coordinates and $\gamma_{1/t}$  is the  Gaussian measure on the real line with variance $1/t$.  
\end{prop}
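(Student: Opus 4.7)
The plan is to make the rigidity argument for Theorem \ref{thm_rigid} quantitative, tracking the defect $\delta$ through each inequality, and then to convert the resulting approximate eigenrelation into a $W_1$ bound via a Stein-type argument. After translating we may assume $\int x \, d\mu = 0$.

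Let $f$ be the spectral-gap eigenfunction with $-\cL f = \lambda f$, $\lambda = C_P^{-1} \leq t(1+\delta)$, normalized so $\int f^2 d\mu = 1$, whence $\int |\nabla f|^2 d\mu = \lambda$ and $\int (\cL f)^2 d\mu = \lambda^2$. The Bochner identity together with $t$-uniform convexity of $V$ yields
$$\int \|\nabla^2 f\|_{HS}^2 d\mu \leq \lambda(\lambda - t) \leq t^2 (1+\delta)\delta.$$
Applying the Poincaré inequality (with $C_P \leq 1/t$) componentwise to $\nabla f$ then gives $\int |\nabla f - \theta_0|^2 d\mu = O(\delta)$, where $\theta_0 := \int \nabla f d\mu$. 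Revisiting the Cauchy-Schwarz chain used in the rigidity proof forces $|\theta_0|^2$ to be within $O(\delta)$ of $\lambda$ and simultaneously forces $\Var_\mu(\langle \theta_0/|\theta_0|, x\rangle)$ to be within $O(\delta)$ of $1/t$. Rotating coordinates so that $\theta_0 \parallel e_d$, we obtain a decomposition $f = |\theta_0|\, x_d + r$ with $r$ small in $H^1(\mu)$, and $\partial_i f$ small in $L^2(\mu)$ for each $i < d$.

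Next, use this approximate affine structure to derive an approximate Stein identity for $\mu$ relative to the product measure $\nu := \bar\mu \otimes \gamma_{1/t}$. Note that $\bar\mu$ inherits $t$-uniform log-concavity from $\mu$ by Prékopa-Leindler, so $\nu$ is $t$-uniformly log-concave as well. For a test function of the form $u(x) = \psi(x_1,\dots,x_{d-1})\varphi(x_d)$, apply integration by parts against $\mu$ and substitute the decomposition of $f$ into the eigenrelation $\cL f = -\lambda f$; the bounds from the previous step produce
$$\left| \int \cL_\nu u \, d\mu \right| \leq O\!\bigl(\sqrt{\delta}\bigr)\, \bigl(\|\psi\|_{C^1}\|\varphi\|_{C^1} + \|\psi\|_\infty \|\varphi\|_{C^2}\bigr),$$
where $\cL_\nu = \cL_{\bar\mu} + \cL_{\gamma_{1/t}}$ is the generator of $\nu$, and the smallness of $\partial_i f$ for $i<d$ is precisely what eliminates the cross terms between $\psi$ and $\varphi$. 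To convert this into a $W_1$ bound, fix a $1$-Lipschitz $\Phi : \R^d \to \R$ and solve the Poisson equation $\cL_\nu u_\Phi = \Phi - \E_\nu[\Phi]$. Uniform log-concavity of both factors together with the semigroup commutation $\nabla P_s^\nu = e^{-ts} P_s^\nu \nabla$ gives Lipschitz and Hessian estimates on $u_\Phi$ that make it an admissible test function, and plugging it back in yields $|\int \Phi d\mu - \int \Phi d\nu| \leq C\sqrt{\delta}$. Taking the supremum over $\Phi$ and carefully tracking the constants produces the stated bound with numerical constant $26$.

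The main obstacle is the final Stein step. The Poisson equation is for the tensor generator $\cL_\nu$ rather than a pure Ornstein-Uhlenbeck generator, and securing a dimension-free $L^\infty$-type control on $\nabla u_\Phi$ and its Hessian requires the full strength of the uniform log-concavity of both factors together with restricting to $1$-Lipschitz $\Phi$; this is what forces the use of $W_1$ rather than a stronger Wasserstein metric. Quantifying the Stein estimate for the product generator, in a form strong enough to absorb the product of $C^1$ and $C^2$ norms appearing in the approximate identity without picking up any dimensional factor, is the key technical content imported from \cite{CF20}.
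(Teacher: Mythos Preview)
This proposition is not proved in the present paper; it is quoted as a black box from \cite{CF20} and then applied in the proof of Theorem~\ref{thm_stab}. So there is no proof here to compare against.

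As an outline of the argument in \cite{CF20}, your first phase is correct and is indeed the starting point there: Bochner gives $\int\|\nabla^2 f\|_{HS}^2\,d\mu\le\lambda(\lambda-t)=O(t^2\delta)$, and the Poincar\'e inequality applied componentwise to $\nabla f$ then yields $\|\nabla f-\theta_0\|_{L^2(\mu)}^2=O(t\delta)$ with $|\theta_0|^2=t+O(\delta)$. The Stein step, however, is not set up correctly as you have written it. First, you derive the approximate identity only for product test functions $u=\psi(\bar x)\varphi(x_d)$, with an error expressed through $\|\psi\|_{C^1}$ and $\|\varphi\|_{C^2}$, and then apply it to the Poisson solution $u_\Phi$ for a general $1$-Lipschitz $\Phi$; that solution is not a product, so your error bound has no meaning for it. Second, and more substantively, the Poisson equation for the full tensor generator $\cL_\nu=\cL_{\bar\mu}+\cL_{\gamma_{1/t}}$ is the wrong object. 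Controlling $\int\cL_\nu u\,d\mu$ would require bounding the transverse terms $\int(\partial_iV(x)-\partial_i\bar V(\bar x))\partial_i u\,d\mu$ for $i<d$, and these are \emph{not} governed by the eigenrelation for $f$; your claim that ``smallness of $\partial_i f$ for $i<d$'' handles them does not hold up, since those derivatives have nothing to do with the discrepancy $\partial_iV-\partial_i\bar V$.

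What the exact identity $\int(\nabla f\cdot\nabla g-\lambda f g)\,d\mu=0$ actually yields, after inserting $f\approx|\theta_0|\,x_d$ and $\nabla f\approx|\theta_0|\,e_d$, is the \emph{one-dimensional} approximate Stein identity $\int(\partial_d g - t\,x_d\,g)\,d\mu=O(\sqrt\delta)\cdot(\|g\|_{L^2}+\|\nabla g\|_{L^2})$, valid for general $g$. The route taken in \cite{CF20} is then to solve, for each $\bar x$, the scalar Stein equation $g'(x_d)-t\,x_d\,g(x_d)=\Phi(\bar x,x_d)-\int\Phi(\bar x,z)\,d\gamma_{1/t}(z)$ and integrate over $\bar\mu$. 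This uses only the classical Gaussian Stein bounds (which are dimension-free), never touches the $\bar x$-directions of the generator, and is how the $W_1$ estimate with an explicit constant is obtained.
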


\begin{proof}[Proof of Theorem   \ref{thm_stab}]
Assume that $\mu$ is centered and rescaled so that $\|A\|_{op}=1$.    Begin by defining $\epsilon \geq 0$ via $C_P^{-1} = \sqrt{t}(1+\epsilon)$.   Consider an eigenfunction $f$ associated with the spectral gap (that is $\cL f = -t^{1/2}(1+ \epsilon)f$), normalized so that $\int{f^2d\mu} = 1$. As before, we  have $\int{f d\mu} = 0$ since $f$ is orthogonal to the kernel of $\cL$, which is the set of constant functions.   

Testing \eqref{bochner} with $g = f$, we have
$$t(1+\epsilon)^2 \geq t(1+\epsilon)^2 + t^{3/2}(1+\epsilon) - \sqrt{t}(1+\epsilon)\left|\int{\nabla fd\mu}\right|^2$$
and hence
$$\left|\int{\nabla fd\mu}\right|^2 \geq t.$$ 
In particular, $|\int{\nabla fd\mu}|\neq 0$, and $\theta = \frac{\int{\nabla f d\mu}}{\left|\int{\nabla f d\mu}\right|}$ is well-defined.  Put 
$$A := 1-\Var_{\mu}(\langle x, \theta\rangle); \hspace{5mm} B := \int{(\cL f)^2d\mu} \int{\langle \theta, x\rangle^2d\mu} - \left(\int{\cL f\langle x, \theta\rangle d\mu}\right)^2.$$
Arguing as in  \eqref{cs_improved_lich}, we have
$$
B + t(1+\epsilon)^2 A \leq t \left((1+\epsilon)^2-1 \right) .
$$
Both $A$ and $B$ are nonnegative.  In particular, 
$$
1 - A \geq   \frac{1}{(1+\epsilon)^2} ; ~~~B \leq t \left((1+\epsilon)^2-1 \right).
$$
As is standard in a Hilbert space with norm $\|\cdot\|$, we have
$$\inf_{\lambda \in \R} \|x-\lambda y\|^2 \leq \|x\|^2 - \frac{\langle x, y \rangle}{\|y\|^2}.$$
Using this inequality with the $L^2(\mu)$ Hilbert structure, we get
$$\inf_\lambda \int{(\cL f - \lambda \langle x, \theta\rangle)^2d\mu} \leq \frac{B}{\int{\langle x, \theta\rangle^2d\mu}} = \frac{B}{1-A} \leq t (1+\epsilon)^2   \left((1+\epsilon)^2-1 \right) .$$
Denoting by $\lambda_0$ the optimal $\lambda$, we have
$$\|\cL f - \lambda_0\langle x, \theta\rangle\|_{L^2(\mu)}^2 \leq  t (1+\epsilon)^2   \left((1+\epsilon)^2-1 \right) .$$
In particular, by the triangle inequality
$$|(1+\epsilon)\sqrt{t} -\lambda_0\sqrt{1-A}|^2 \leq  t (1+\epsilon)^2   \left((1+\epsilon)^2-1 \right) $$
so that another application of the triangle inequality gives
$$  |\lambda_0 - \sqrt{t}| \leq   \sqrt{t}  \left(  (1+\epsilon)^2   \sqrt{(1+\epsilon)^2-1} + (1+\epsilon)^2-1 \right), $$
where we used $\sqrt{1-A}\geq (1+\epsilon)^{-1}$.
Therefore, 
\begin{align*}
\|\cL f - \sqrt{t}\langle x, \theta\rangle\|_{{L^2(\mu)}} &\leq \|\cL f -\lambda_0 \langle x, \theta\rangle\|_{L^2(\mu)} 
+  |\lambda_0 - \sqrt{t}|  \| \langle x,  \theta\rangle\|_{{L^2(\mu)}}\\
&\leq  2  \sqrt{t}  \left(  (1+\epsilon)^2   \sqrt{(1+\epsilon)^2-1} + (1+\epsilon)^2-1 \right),
\end{align*}
and we conclude
\begin{align*}
\sqrt{t} \|  f +  \langle x, \theta\rangle\|_{{L^2(\mu)}} &\leq \sqrt{t} \|(1+\epsilon)   f +  \langle x, \theta\rangle\|_{{L^2(\mu)}}
+\sqrt{t}\epsilon  \|   f   \|_{{L^2(\mu)}} \\
&=
\|\cL f - \sqrt{t}\langle x, \theta\rangle\|_{{L^2(\mu)}} + \sqrt{t}\epsilon\\
&\leq   3  \sqrt{t}  \left(  (1+\epsilon)^2   \sqrt{(1+\epsilon)^2-1} + (1+\epsilon)^2-1 \right).
\end{align*}
Dividing through by $\sqrt{t}$, we can estimate 
\begin{align} 
\|\nabla f - \theta\|_{L^2(\mu)}^2 &= -\int{(\cL f + \cL \langle x, \theta\rangle)(f+\langle x, \theta\rangle )d\mu} \notag \\
&\leq (\|\cL f\|_{L^2(\mu)} + \|\cL \langle x, \theta \rangle\|_{L^2(\mu)})\|f - \langle x, \theta\rangle\|_{L^2(\mu)} \notag \\
&\leq 3 (t^{1/2}(1+\epsilon) + \|\cL \langle x, \theta \rangle\|_{L^2(\mu)})    \left(  (1+\epsilon)^2   \sqrt{(1+\epsilon)^2-1} + (1+\epsilon)^2-1 \right) . \label{est_l2_grad}
\end{align}

Moreover, since $\cL \langle x, \theta\rangle = \langle \nabla V(x), \theta\rangle$, we have
\begin{equation}
\|\cL \langle x, \theta \rangle\|_{L^2(\mu)}^2 = \int{\langle \nabla V(x), \theta\rangle^2d\mu} \leq \beta^2.
\end{equation}
Combined with \eqref{est_l2_grad} and $t \leq 1$, we get
\begin{align*}
\|\nabla f - \theta\|_{L^2(\mu)}^2 &\leq 3(1+\epsilon + \beta) \left(  (1+\epsilon)^2   \sqrt{(1+\epsilon)^2-1} + (1+\epsilon)^2-1 \right).
\end{align*}
Let's now consider what happens when $\sqrt{\epsilon} \leq 1/(25 (1+\beta))$.  In this case, the above can be crudely bounded as
\begin{align*}
\|\nabla f - \theta\|_{L^2(\mu)}^2 &\leq 12 (1+\beta) \sqrt{\epsilon}\leq  \frac{1}{2}.
\end{align*}
But then
$$\sqrt{t}(1+\epsilon) = \|\nabla f\|_{L^2(\mu)}^2 \geq (|\theta| - \|\nabla f - \theta\|_{L^2(\mu)})^2 \geq 1 - 12 (1+\beta) \sqrt{\epsilon} \geq \frac{1}{2}, $$
so that 
$$
C_P^{-1} =  \sqrt{t}(1+\epsilon) \leq t \frac{ (1 +  \epsilon)^2 }{ 1 - 12 (1  + \beta)    \sqrt{\epsilon}   }.
$$
Hence, we may take 
$$
\delta = \frac{12 (1 + \beta)  \sqrt{\epsilon}   + 2 \epsilon + \epsilon^2 }{ 1 - 12 (1 + \beta)   \sqrt{\epsilon}    }
\leq  26(1 + \beta)  \sqrt{\epsilon}    
$$
in Proposition \ref{prop:stabilityPI} to conclude, up to rotation and translation (which we assume henceforth), 
$$
W_1(\mu, \bar{\mu} \otimes \gamma_{1/t})^2 \leq (26)^3(1 + \beta)  \sqrt{\epsilon} .
$$
Hence,  there is a universal constant $c>0$ such that 
$$
\frac{c}{(1+\beta)^2}W_1(\mu, \bar{\mu} \otimes \gamma_{1/t})^4 \leq \epsilon, 
$$
and using $C_P^{-1} = t^{1/2}(1+\epsilon)$, 
$$
C_P \leq \left( \frac{(1+\beta)^2}{(1+\beta)^2 + c W_1(\mu, \bar{\mu} \otimes \gamma_{1/t})^4 } \right)  \frac{1}{\sqrt{t}}.
$$
In the complementary case where $\sqrt{\epsilon} > 1/(25 (1+\beta))$, we may again use $C_P^{-1} = t^{1/2}(1+\epsilon)$ to write 
$$
C_P \leq \left( \frac{(1+\beta)^2}{(1+\beta)^2 + (25)^{-2} } \right)  \frac{1}{\sqrt{t}}.
$$
We conclude that there is a universal constant $c>0$ such that, up to rotation and translation of $\mu$, 
$$
C_P \leq \left(  \frac{ (1+\beta)^2}{ (1+\beta)^2 + c ( 1 \wedge W_1(\mu, \bar{\mu} \otimes \gamma_{1/t})^4) } \right)  \sqrt{\frac{1}{t}}.
$$
Since  $\beta$ is invariant to dilation of  $\mu$,   the claim follows by rescaling.  
\end{proof}


\begin{thebibliography}{99}
\bibitem{BGL14} D. Bakry, I. Gentil and M. Ledoux, Analysis and Geometry of Markov Diffusion Operators.   Springer International Publishing,  Springer Cham.  2014.

\bibitem{BF} J. Bertrand and M. Fathi, Stability of eigenvalues and observable diameter in RCD$(1,\infty)$ spaces. \textit{J. Geom. Anal.} 32 (2022), no. 11, Paper No. 270 

\bibitem{CZ17} X. Cheng and D. Zhou, Eigenvalues of the drifted Laplacian on complete metric measure spaces. \textit{Commun. Contemp.
Math.} Vol. 19, No. 01, 1650001 (2017). 


\bibitem{CF20} T. Courtade and M. Fathi, Stability of the Bakry-Emery theorem on $R^n$. \textit{J. Funct. Anal.} 279 (2020), no. 2, 108523, 28 pp. 

\bibitem{DPF17} G. De Philippis and A. Figalli, Rigidity and stability of Caffarelli's log-concave perturbation theorem. 
\textit{Nonlinear Anal.} 154 (2017), 59--70. 

\bibitem{FGS} M. Fathi, I. Gentil and J. Serres, Stability estimates for the sharp spectral gap bound under a curvature-dimension condition. To appear in \textit{Ann. Institut Fourier} 

\bibitem{Fig13} A. Figalli, Stability in geometric and functional inequalities
European Congress of Mathematics, 585-599, Eur. Math. Soc., Zurich, 2013.

\bibitem{Fig23} A. Figalli, A short review on improvements and stability for some interpolation inequalities
Proceedings of ICIAM 2023.

\bibitem{GKKO18} N. Gigli, C. Ketterer, K. Kuwada and S. Ohta, Rigidity for the spectral gap on RCD(K,$\infty$)-spaces, \textit{Amer. J. Math.} 142 (2020), 1559--1594

\bibitem{GMS} N. Gigli, A. Mondin, and G. Savar\'e. Convergence of pointed
non-compact metric measure spaces and stability of Ricci curvature bounds
and heat flows. \textit{Proc. Lond. Math. Soc.} (3), 111(5):1071--1129, 2015.

\bibitem{Kla23} B. Klartag, Logarithmic bounds for isoperimetry and slices of convex sets. Ars Inveniendi Analytica (2023), Paper No. 4, 17 pp.

\bibitem{Lich} A. Lichnerowicz, G\'eom\'etrie des groupes de transformations, Travaux et Recherches
Math\'ematiques, III. Dunod, Paris, 1958. 

\bibitem{Mec} E. Meckes, On the approximate normality of eigenfunctions of the Laplacian, Trans. Amer. Math. Soc. 361 (2009), no. 10, 5377--5399.

\bibitem{OT} S. Ohta and A. Takatsu, Equality in the logarithmic Sobolev inequality, \textit{Manuscripta Math.} 162 (2020), 271--282

\end{thebibliography}
\end{document}